        \newtheorem{theorem}{Theorem}[section]
\newtheorem{definition}[theorem]{Definition}
        \newtheorem{lemma}[theorem]{Lemma}
        \newtheorem{corollary}[theorem]{Corollary}
        \newtheorem{remark}[theorem]{Remark}
\numberwithin{equation}{section}
\newcommand \R      {\mathbb R}
\newcommand \be     {\begin{equation}}
\newcommand \ee     {\end{equation}}
\newcommand{\T}{{\mathbb{T}}}
\newcommand{\N}{{\mathbb{N}}}
\newcommand{\cP}{{\mathcal{P}}}
\newcommand{\veps}{\varepsilon}
\newcommand{\sig}{^{\sigma}}
\newcommand{\kap}{^{\kappa}}
\newcommand{\tx}[1]{\quad\text{#1}\quad}
\newcommand{\neu}{\\ \\}
\newcommand{\tee}{t_1,t_2,\ldots,t_n}
\newcommand{\deltee}{\Delta_1t_1\Delta_2t_2\ldots\Delta_nt_n}
\begin{document}

\title{Two Dimensional Integral  Inequalities on Time Scales}
\author{ Svetlin G. Georgiev\footnote{Sorbonne University, Paris, France. E-mail:svetlingeorgiev1@gmail.com}, Goverdan Khadekar\footnote{RTM Nagpur University, Faculty of Science, Department of Mathematics, India. E-mail:gkhadekar@yahoo.com} and Praveen Kumar\footnote{RTM Nagpur University, Faculty of Science, Department of Mathematics, India. E-mail:pkumar6743@gmail.com} }
\maketitle
\begin{abstract}
In this paper we formulate and prove Wendroff's inequalities on time scales. Next,  we deduct some Pachpatte's inequalities.

\footnotetext{MSC 2010: 34A40, 39A10.}
\footnotetext{Key words and Phrases: time scale, dynamic equations, dynamic inequalities, Wendroff inequalities, Pachpatte inequalities.}
\end{abstract}

\maketitle
\section{Introduction}
The theory of time scales was initiated by Hilger \cite{hilger} in his Ph.D. thesis in 1988 in order to contain both difference and differential calculus in a consistent way. Since then many authors have investigated  various aspects of the theory of dynamic equations on time scales. For example, the monographes \cite{bohner1, georgiev} and the references cited therein. At the same time, in the  papers \cite{ag}, \cite{and2}, \cite{10}, \cite{14}, \cite{erbe}, \cite{12}, \cite{li}, \cite{ben}, \cite{15}, \cite{17}, \cite{xing}, \cite{xu}, \cite{13} and references therein have studied the theory of integral inequalities on time scales.  In \cite{and1} and \cite{and3} the author establishes some general nonlinear dynamic inequalities on general time scales involving functions in two independent variables and the author extends double sum  and integral inequalities of Hilbert-Pachpatte type to general dynamic double integral inequalities on time scales. In \cite{and2} and \cite{fer} are established some Wendroff's type inequalities, and in \cite{and2} they are established some Wendroff's type inequalities by Picard operators. In this paper we study some two-dimensional integral and integro-dynamic    Pachpatte's  inequalities on time scales.

\noindent The paper is organized as follows. In the next Section we give some basic definitions and facts of the time scale calculus.  In Section 4 we get some integral and integro-dynamic  Pachpatte's inequalities on time scales.
\section{Time Scales Essentials}
This section is devoted to a brief exposition of the time scale calculus. A detailed discussion of the time scale calculus is beyond the scope of this book, for this reason the author confine to outlining a minimal set of properties needed in the further proceeding. The presentation in this section follows the books \cite{bohner1} and \cite{georgiev}.
\begin{definition}
\vbox{\index{Time Scale}} A time scale  is an arbitrary nonempty closed subset of the real numbers.
\end{definition}
\noindent
We will denote a time scale by the symbol $\mathbb{T}$.
\begin{definition}
\vbox{\index{Forward Jump Operator}} For $t\in \mathbb{T}$ we define the  forward jump operator $\displaystyle{\sigma:\mathbb{T}\longmapsto \mathbb{T}}$ as follows
$$
\displaystyle{\sigma(t)=\inf\{s\in \mathbb{T}:s>t\}.}
$$

\noindent We note that $\displaystyle{\sigma(t)\geq t}$ for any $\displaystyle{t\in \mathbb{T}}$. If $\sigma(t)>t$, then we say that $t$ is right-scattered. If $\sigma(t)=t$ and $t<\sup\mathbb{T}$, then we say that $t$ is right-dense.
\end{definition}
\begin{definition}
\vbox{\index{Backward Jump Operator}} For $t\in \mathbb{T}$ we define the  backward jump operator $\displaystyle{\rho:\mathbb{T}\longmapsto \mathbb{T}}$ as follows
$$
\displaystyle{\rho(t)=\sup\{s\in \mathbb{T}:s<t\}.}
$$

\noindent We note that $\displaystyle{\rho(t)\leq t}$ for any $\displaystyle{t\in \mathbb{T}}$.
If $\rho(t)<t$, then we say that $t$ is left-scattered. If $\rho(t)=t$ and $t>\inf\mathbb{T}$, then we say that $t$ is left-dense.
\end{definition}
\begin{definition}
We set
$$
\displaystyle{\inf \O=\sup\mathbb{T},\quad \sup\O=\inf\mathbb{T}.}
$$
\end{definition}

\noindent Let $\displaystyle{\mathbb{T}}$ be a time scale with forward jump operator and  backward jump operator  $\displaystyle{\sigma}$ and  $\displaystyle{\rho}$, respectively.
\begin{definition}
We define the set
$$
\mathbb{T}^{\kappa}=\left\{
\begin{array}{l}
\mathbb{T}\backslash (\rho(\sup\mathbb{T}), \sup\mathbb{T}]\quad {\rm if}\quad \sup\mathbb{T}<\infty\\ \\
\mathbb{T}\quad {\rm otherwise}.
\end{array}
\right.
$$
\end{definition}

\begin{definition}
The  graininess function $\displaystyle{\mu:\mathbb{T}\longmapsto [0, \infty)}$  is defined by
$$
\displaystyle{\mu(t)=\sigma(t)-t.}
$$\vbox{\index{Graininess Function}}
\end{definition}

\begin{definition}\label{appendix21}
Assume that $\displaystyle{f:\mathbb{T}\longmapsto \R}$ is a function and let $\displaystyle{t\in \mathbb{T}^{\kappa}}$. We define $\displaystyle{f^{\Delta}(t)}$ to be the number, provided it exists, as follows: for any $\epsilon>0$ there is a neighbourhood $\displaystyle{U}$ of $t$, $\displaystyle{U=(t-\delta, t+\delta)\cap\mathbb{T}}$ for some $\delta>0$, such that
$$
\displaystyle{|f(\sigma(t))-f(s)-f^{\Delta}(t)(\sigma(t)-s)|\leq \epsilon |\sigma(t)-s|\quad {\rm for}\quad {\rm all}\quad s\in U,\quad s\ne \sigma(t).}
$$
We say $\displaystyle{f^{\Delta}(t)}$ the  delta or  Hilger derivative of $f$ at $t$. \vbox{\index{Delta Derivative}} \vbox{\index{Hilger Derivative}}

\noindent We say that $f$ is  delta or Hilger differentiable, shortly  differentiable, \vbox{\index{Delta Differentiable Function}} \vbox{\index{Hilger Differentiable Function}} in $\displaystyle{T^{\kappa}}$ if $\displaystyle{f^{\Delta}(t)}$ exists for all $\displaystyle{t\in \mathbb{T}^{\kappa}}$. The function $\displaystyle{f^{\Delta}:\mathbb{T}\longmapsto\R}$ is said to be  delta derivative or Hilger derivative, shortly derivative, of $f$ in $\displaystyle{T^{\kappa}}$.
\end{definition}
\begin{remark}
If $\mathbb{T}=\R$, then the delta derivative coincides with the classical derivative.
\end{remark}
\noindent Note that the delta derivative is well-defined. For the properties of the delta derivative we refer the reader to \cite{bohner1} and \cite{georgiev}.

\begin{definition}
A function $\displaystyle{f:\mathbb{T}\longmapsto\R}$ is called regulated \vbox{\index{Regulated Function}} provided its right-sided limits  exist(finite) at all right-dense points in $\displaystyle{\mathbb{T}}$ and its left-sided limits exist(finite) at all left-dense points in $\displaystyle{\mathbb{T}}$.
\end{definition}
\begin{definition}
A continuous function $\displaystyle{f:\mathbb{T}\longmapsto\R}$ is called pre-differentiable with region of differentiation $\displaystyle{D}$, provided \vbox{\index{Pre-Differentiable Function}}
\begin{enumerate}
\item $\displaystyle{D\subset \mathbb{T}^{\kappa}}$,
\item $\displaystyle{\mathbb{T}^{\kappa}\backslash D}$ is countable and contains no right-scattered elements of $\displaystyle{\mathbb{T}}$,
\item $\displaystyle{f}$ is differentiable at each $\displaystyle{t\in D}$.
\end{enumerate}
\end{definition}
\begin{theorem}[\cite{bohner1}, \cite{georgiev}]\label{theorem11123} Let $\displaystyle{t_0\in\mathbb{T}}$, $\displaystyle{x_0\in\R}$, $\displaystyle{f:\mathbb{T}^{\kappa}\longmapsto\R}$ be given regulated map. Then there exists exactly one pre-differentiable function $\displaystyle{F}$ satisfying
$$
\displaystyle{F^{\Delta}(t)=f(t)\quad {\rm for}\quad {\rm all}\quad t\in D,\quad F(t_0)=x_0.}
$$
\end{theorem}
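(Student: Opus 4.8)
The plan is to separate existence from uniqueness and to base both on a single combinatorial tool, the \emph{induction principle} on time scales, from which a mean value inequality follows. First I would establish the induction principle: fixing $t_0\in\mathbb{T}$ and a family of statements $\{S(t):t\in[t_0,\infty)\cap\mathbb{T}\}$ satisfying (i) $S(t_0)$ holds, (ii) $S(t)$ at a right-scattered $t$ implies $S(\sigma(t))$, (iii) $S(t)$ at a right-dense $t$ implies $S(s)$ on a right neighbourhood of $t$, and (iv) stability under left-dense limits, one concludes $S(t)$ for all $t\ge t_0$. This is proved by contradiction: let $t^*=\inf\{t\ge t_0:S(t)\text{ fails}\}$; closedness of $\mathbb{T}$ and a case analysis on whether $t^*$ is right-/left-scattered or dense, fed by (i)--(iv), forces a contradiction. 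From the induction principle I would derive the key \emph{mean value inequality}: if $H$ is pre-differentiable with region $D$ and $\abs{H^\Delta(t)}\le M$ for all $t\in D$, then $\abs{H(b)-H(a)}\le M(b-a)$ whenever $a\le b$. Here the inductive statement is $S(t):\abs{H(t)-H(a)}\le M(t-a)$; the right-scattered step uses $H^\Delta(t)=(H(\sigma(t))-H(t))/\mu(t)$, the right-dense step uses the $\varepsilon$-definition of $H^\Delta$, and the left-dense step uses continuity of $H$ together with the fact that $\mathbb{T}^\kappa\setminus D$ is countable and contains no right-scattered points, so the exceptional points never obstruct the propagation.

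Uniqueness is then immediate. Suppose $F$ and $G$ both satisfy the conclusion, with regions $D_F$ and $D_G$. Then $H=F-G$ is continuous and pre-differentiable on $D=D_F\cap D_G$, whose complement in $\mathbb{T}^\kappa$ is again countable and free of right-scattered points, and $H^\Delta=0$ on $D$. Applying the mean value inequality with $M=0$ on any interval shows $H$ is constant, and $H(t_0)=x_0-x_0=0$ gives $F\equiv G$.

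For existence I would first reduce to a compact interval $[a,b]\cap\mathbb{T}$ (exhausting $\mathbb{T}$ by such intervals and patching if $\mathbb{T}$ is unbounded) and exploit that a regulated $f$ is the uniform limit of a sequence of piecewise constant functions $f_n$ with finitely many break points. Each $f_n$ admits an explicit pre-antiderivative $F_n$: on each piece where $f_n$ equals a constant $c$ one takes the function with constant delta derivative $c$, matching values at the break points and normalising $F_n(t_0)=x_0$; its region of differentiation omits only the finitely many break points and right-scattered points, a set of the required type. The mean value inequality gives $\|F_n-F_m\|_\infty\le (b-a)\,\|f_n-f_m\|_\infty$, so $\{F_n\}$ is uniformly Cauchy and converges uniformly to a continuous limit $F$ with $F(t_0)=x_0$. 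It remains to verify that $F$ is pre-differentiable with $F^\Delta=f$ on a region $D$ whose complement is countable and contains no right-scattered points: at right-scattered $t$ one passes the identity $F_n^\Delta(t)=(F_n(\sigma(t))-F_n(t))/\mu(t)$ to the limit to get $F^\Delta(t)=f(t)$, while the exceptional set is assembled from the countably many discontinuities of $f$ at right-dense points.

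The main obstacle is precisely this last step, interchanging the uniform limit with delta differentiation and pinning down the exceptional set. At right-dense points one cannot use the difference-quotient formula and must instead combine the uniform convergence $F_n\to F$ with the $\varepsilon$-$\delta$ definition of the derivative and the one-sided-limit structure of the regulated function $f$, ensuring that the set where $F^\Delta$ either fails to exist or differs from $f$ is countable and meets no right-scattered point. Controlling this set, rather than the purely algebraic right-scattered case, is where the regulated hypothesis on $f$ is genuinely used and is the delicate part of the argument.
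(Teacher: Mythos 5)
The paper offers no proof of this statement: it is quoted from \cite{bohner1} and \cite{georgiev}, where the argument (going back to Hilger) follows essentially the route you describe --- induction principle, mean value inequality, uniqueness from the case $M=0$, existence by uniform approximation of the regulated $f$ by step functions. So your strategy is the standard one, but two steps, as written, have genuine gaps.

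First, your inductive statement $S(t):\ \abs{H(t)-H(a)}\le M(t-a)$ for the mean value inequality does not propagate past a right-dense point $t$ lying in the countable exceptional set $\mathbb{T}^{\kappa}\setminus D$: there $H^{\Delta}(t)$ does not exist, so neither the $\varepsilon$-definition of the derivative nor mere continuity of $H$ yields $\abs{H(s)-H(a)}\le M(s-a)$ on a right neighbourhood, and the claim that ``the exceptional points never obstruct the propagation'' is precisely what fails. The standard repair is to enumerate $\mathbb{T}^{\kappa}\setminus D=\{r_n\}$ and prove instead $\abs{H(t)-H(a)}\le M(t-a)+\varepsilon\bigl((t-a)+\sum_{n:\,r_n<t}2^{-n}\bigr)$; at $t=r_m$ continuity of $H$ lets you absorb the error into the fresh term $\varepsilon 2^{-m}$, and letting $\varepsilon\to 0$ at the end recovers the clean inequality. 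Second, the step you yourself flag as the ``main obstacle'' --- verifying $F^{\Delta}(t)=f(t)$ at right-dense $t\in D$ after passing to the uniform limit --- is left open, and uniform convergence $\|F_n-F\|_\infty\to 0$ alone cannot close it, since an error of size $2\|F_n-F\|_\infty$ is not $o(s-t)$. The missing idea is to apply the (corrected) mean value inequality to $F_m-F_n$ on the subinterval from $t$ to $s$ rather than globally, which after $m\to\infty$ gives $\abs{(F-F_n)(s)-(F-F_n)(t)}\le \|f-f_n\|_\infty\,(s-t)$; combined with $\abs{F_n(s)-F_n(t)-f_n(t)(s-t)}\le \sup_{\tau}\abs{f_n(\tau)-f_n(t)}\,(s-t)$ (supremum over $\tau$ between $t$ and $s$) and the existence of the right-sided limit of $f$ at $t$ --- excluding from $D$ the countably many right-dense points where $f(t)\ne f(t+)$ --- this yields $F^{\Delta}(t)=f(t)$. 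With these two insertions your outline becomes a complete proof matching the one in the cited sources.
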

\begin{definition}
Assume $\displaystyle{f:\mathbb{T}\longmapsto \R}$ is a regulated function. Any function $\displaystyle{F}$ by Theorem \ref{theorem11123} is called a pre-antiderivative of $\displaystyle{f}$. \vbox{\index{Pre-Antiderivative}}
 We define the indefinite integral of a regulated function $\displaystyle{f}$  by
 $$
 \displaystyle{\int f(t)\Delta t=F(t)+c,}
 $$
 where $\displaystyle{c}$ is an arbitrary constant and $\displaystyle{F}$ is a pre-antiderivative of $\displaystyle{f}$. We define the Cauchy integral by
 $$
 \displaystyle{\int_{\tau}^s f(t)\Delta t=F(s)-F(\tau)\quad {\rm for}\quad {\rm all}\quad \tau, s\in\mathbb{T}.}
 $$
 A function $\displaystyle{F:\mathbb{T}\longmapsto\R}$ is called an antiderivative of $\displaystyle{f:\mathbb{T}\longmapsto \R}$ provided
 $$
 \displaystyle{F^{\Delta}(t)=f(t)\quad {\rm holds}\quad {\rm for}\quad {\rm all}\quad t\in \mathbb{T}^{\kappa}.}
 $$
 \end{definition}
For properties of the delta integral we refer the reader to \cite{bohner1} and \cite{georgiev}.
\begin{definition}
We say that $f: \mathbb{T}\to \mathbb{R}$ is rd-continuous provided $f$ is continuous at each right-dense point of $\mathbb{T}$ and has a finite left-dense limit at each left-dense point of $\mathbb{T}$. The set of rd-continuous functions will be denoted by $\mathcal{C}_{rd}(\mathbb{T})$ and the set of functions that are differentiable and whose derivative is rd-continuous is denoted by $\mathcal{C}_{rd}^1(\mathbb{T})$.
\end{definition}
\begin{definition}
We say that $f: \mathbb{T}\to \R$  is regressive provided
\begin{equation*}
1+\mu(t) f(t)\ne 0,\quad t\in \mathbb{T}.
\end{equation*}
We denote by $\mathcal{R}$ the set of all regressive and rd-continuous functions. Define
\begin{equation*}
\mathcal{R}_+=\left\{ f\in \mathcal{R}: 1+\mu(t) f(t)>0,\quad t\in \mathbb{T}\right\}.
\end{equation*}
\end{definition}
\begin{definition}
If $f, g\in \mathcal{R}$, then we define
\begin{equation*}
f\oplus g=f+g+\mu fg,\quad \ominus g=-\frac{g}{1+\mu g},\quad f\ominus g= f\oplus(\ominus g).
\end{equation*}
\end{definition}
\begin{definition}
If $f: \mathbb{T}\to \R$ is rd-continuous and regressive, then the exponential function $e_p(\cdot, t_0)$ is for each fixed $t_0\in \mathbb{T}$ the unique solution of the initial value problem
\begin{equation*}
x^{\Delta}=f(t) x,\quad x(t_0)=1\quad \textrm{on}\quad \mathbb{T}.
\end{equation*}
\end{definition}
For properties of regressive functions, rd-continuous functions and the exponential function we refer the reader to \cite{bohner1} and \cite{georgiev}.
\begin{lemma}[Comparison Lemma]
\label{lemma2.1} Let $x\in \mathcal{C}_{rd}^1(\mathbb{T})$, $f, g\in \mathcal{C}_{rd}(\mathbb{T})$, $g\in \mathcal{R}^+$, $a\in \mathbb{T}$ and
\begin{equation*}
x^{\Delta}(t)\leq f(t)+g(t) x(t),\quad t\geq a.
\end{equation*}
Then
\begin{equation*}
x(t)\leq x(a) e_g(t, a)+\int_a^t f(s) e_{\ominus g}(\sigma(s), t)\Delta s,\quad t\geq a.
\end{equation*}
\end{lemma}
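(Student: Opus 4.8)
The plan is to use the integrating-factor technique adapted to the time scale calculus, with the positivity guaranteed by $g\in\mathcal{R}^+$ playing the role that keeps all inequalities pointing the right way. First I would introduce the auxiliary function $y(t)=x(t)\,e_{\ominus g}(t,a)$ and compute its delta derivative. Using the product rule in the form $(uv)^{\Delta}=u^{\Delta}v^{\sigma}+u\,v^{\Delta}$ together with the defining relation $e_{\ominus g}^{\Delta}(\cdot,a)=(\ominus g)\,e_{\ominus g}(\cdot,a)$ and the simplification $e_{\ominus g}^{\sigma}(t,a)=e_{\ominus g}(t,a)/(1+\mu(t)g(t))$ (which comes from $1+\mu(\ominus g)=1/(1+\mu g)$), I expect to obtain the clean identity
\begin{equation*}
y^{\Delta}(t)=\frac{e_{\ominus g}(t,a)}{1+\mu(t)g(t)}\left(x^{\Delta}(t)-g(t)x(t)\right).
\end{equation*}

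Next I would invoke positivity: since $g\in\mathcal{R}^+$, we have $1+\mu g>0$ and $e_{\ominus g}(\cdot,a)>0$, so the prefactor is strictly positive. The hypothesis $x^{\Delta}\le f+gx$ gives $x^{\Delta}-gx\le f$, and therefore
\begin{equation*}
y^{\Delta}(t)\le\frac{e_{\ominus g}(t,a)}{1+\mu(t)g(t)}\,f(t)=e_{\ominus g}(\sigma(t),a)\,f(t),
\end{equation*}
where the last equality again uses $e_{\ominus g}^{\sigma}(t,a)=e_{\ominus g}(\sigma(t),a)$.

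Then I would integrate this inequality from $a$ to $t$, using monotonicity of the delta integral and the normalization $y(a)=e_{\ominus g}(a,a)=1$, to arrive at $x(t)\,e_{\ominus g}(t,a)\le x(a)+\int_a^t f(s)\,e_{\ominus g}(\sigma(s),a)\,\Delta s$. Finally, multiplying through by the strictly positive quantity $e_g(t,a)=1/e_{\ominus g}(t,a)$ and pulling it inside the integral, I would apply the exponential identities $e_g(t,a)\,e_{\ominus g}(\sigma(s),a)=e_g(t,\sigma(s))=e_{\ominus g}(\sigma(s),t)$, which follow from the semigroup property of $e_g$ and the inversion rule $e_{\ominus g}(u,v)=e_g(v,u)$; this produces exactly the asserted bound.

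The main obstacle I anticipate is the bookkeeping with the exponential function rather than any deep argument: correctly establishing $e_{\ominus g}^{\sigma}=e_{\ominus g}/(1+\mu g)$ so that the prefactor collapses to $e_{\ominus g}(\sigma(t),a)$, and verifying the final kernel rewrite $e_g(t,a)\,e_{\ominus g}(\sigma(s),a)=e_{\ominus g}(\sigma(s),t)$. Once the strict positivity of the prefactor is secured from $g\in\mathcal{R}^+$, the direction of the inequality is preserved at every step, and the remaining manipulations are routine properties of the time scale exponential.
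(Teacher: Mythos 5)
Your argument is correct and is exactly the standard integrating-factor proof of this comparison result (it is Theorem 6.1 in Bohner--Peterson, which the paper cites); the paper itself states Lemma \ref{lemma2.1} without proof, so there is no in-text argument to compare against. All the key identities you rely on --- $e_{\ominus g}^{\sigma}(t,a)=e_{\ominus g}(t,a)/(1+\mu(t)g(t))$, positivity of the prefactor from $g\in\mathcal{R}^{+}$, and the kernel rewrite $e_g(t,a)\,e_{\ominus g}(\sigma(s),a)=e_{\ominus g}(\sigma(s),t)$ --- are genuine properties of the time scale exponential and are used correctly. One small slip: the initial value is $y(a)=x(a)\,e_{\ominus g}(a,a)=x(a)$, not $1$; your subsequent displayed inequality uses $x(a)$ correctly, so this is only a misstatement in the prose.
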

Let $n\in\N$ be fixed. For each $i\in\{1,2,\ldots,n\}$,
we denote by $\T_i$ a time scale.

\begin{definition}
The set\vbox{\index{ lambda@$\Lambda^n$}}
$$ \Lambda^n=\T_1\times\T_2\times\cdots\times\T_n
   =\left\{t=(\tee):\;t_i\in\T_i,\;i=1,2,\ldots,n\right\} $$
is called
an {\em $n$-dimensional time scale}.\vbox{\index{Higher-Dimensional time scale}}
\end{definition}

\begin{definition}
Let $\sigma_i$, $i\in\{1,2,\ldots,n\}$,
be the forward jump operator in $\T_i$.
The operator $\sigma:\Lambda^n\to\Lambda^n$ defined by
$$ \sigma(t)=(\sigma_1(t),\sigma_2(t),\ldots,\sigma_n(t)) $$
is said to be
the {\em forward jump operator}\vbox{\index{Forward Jump Operator}}
in $\Lambda^n$.
\end{definition}

\begin{definition}
Let $\rho_i$, $i\in\{1,2,\ldots,n\}$,
be the backward jump operator in $\T_i$.
The operator $\rho:\Lambda^n\to\R^n$ defined by
$$ \rho(t)=(\rho_1(t_1),\rho_2(t_2),\ldots,\rho_n(t_n)),\quad
   t=(\tee)\in\Lambda^n, $$
is said to be
the {\em backward jump operator}\vbox{\index{Backward jump operator}}
in $\Lambda^n$.
\end{definition}

\begin{definition}
For $x=(x_1,x_2,\ldots,x_n)\in\R^n$ and $y=(y_1,y_2,\ldots,y_n)\in\R^n$,
we write
$$ x\geq y $$
whenever
$$ x_i\geq y_i \tx{for all} i=1,2,\ldots,n. $$
In a similar way, we understand $x>y$ and $x<y$ and $x\leq y$.
\end{definition}
\begin{definition}
The {\em graininess}\vbox{\index{Graininess}}
function $\mu:\Lambda^n\to[0,\infty)^n$ is defined by
$$ \mu(t)=(\mu_1(t_1),\mu_2(t_2),\ldots,\mu_n(t_n)),\quad
   t=(\tee)\in\Lambda^n. $$
\end{definition}

\begin{definition}
Let $f:\Lambda\to\R$.
We introduce the following notations
\begin{eqnarray*}
   f\sig(t)
   &=& f\left(\sigma_1(t_1),\sigma_2(t_2),\ldots,\sigma_n(t_n)\right),\neu
   f_i^{\sigma_i}(t)
   &=& f\left(t_1,\ldots,t_{i-1},\sigma_i(t_i),t_{i+1},\ldots,t_n\right),\neu
   f_{i_1i_2\ldots i_l}^{\sigma_{i_1}\sigma_{i_2}\ldots \sigma_{i_l}}(t)
   &=& f\left(\ldots,\sigma_{i_1}(t_{i_1}),\ldots,
              \sigma_{i_2}(t_{i_2}),\ldots, \sigma_{i_l}(t_{i_l}),\ldots\right),
\end{eqnarray*}
where $1\leq i_1<i_2<\ldots<i_l\leq n$, $i_m\in\N$, $m\in\{1,2,\ldots,l\}$,
$l\in\N$.
\end{definition}
\begin{definition}
We set
\begin{gather*}
   \Lambda^{\kappa n}=\T_1\kap\times\T_2\kap\times\ldots\times\T_n\kap,\neu
   \Lambda_i^{\kappa_in}=\T_1\times\ldots\times\T_{i-1}\times
   \T_i\kap\times\T_{i+1}\times\ldots\times\T_n,\quad i=1,2,\ldots,n,\neu
   \Lambda_{i_1i_2\ldots i_l}^{\kappa_{i_1}\kappa_{i_2}\ldots \kappa_{i_l}n}
   =\ldots\times\T_{i_1}\kap\times\ldots\times\T_{i_2}\kap\times\ldots\times
   \T_{i_l}\kap\times \ldots,
\end{gather*}
where $1\leq i_1<i_2<\ldots<i_l\leq n$, $i_m\in\N$, $m=1,2,\ldots,l$.
\end{definition}

\begin{remark}
If $(i_1,i_2,\ldots,i_l)=(1,2,\ldots,n)$, then
$$ \Lambda_{i_1i_2\ldots i_l}^{\kappa_1\kappa_2\ldots \kappa_ln}
   =\Lambda^{\kappa n}. $$
\end{remark}

\begin{definition}
Assume that $f:\Lambda^n\to\R$ is a function and let
$t\in\Lambda_{i}^{\kappa_i n}$.
We define
$$ \frac{\partial f(\tee)}{\Delta_it_i}
   =\frac{\partial f(t)}{\Delta_it_i}
   =\frac{\partial f}{\Delta_it_i}(t)
   =f_{t_i}^{\Delta_i}(t) $$
to be the number, provided it exists, with the property that
for any $\veps_i>0$, there exists a neighbourhood
$$ U_i=(t_i-\delta_i,t_i+\delta_i)\cap\T_i, $$
for some $\delta_i>0$, such that
\begin{multline}\label{13}
   \Bigl|f(t_1,\ldots,t_{i-1},\sigma_i(t_i),t_{i+1},\ldots,t_n)
   -f(t_1,\ldots,t_{i-1},s_i,t_{i+1},\ldots,t_n)\neu
   -f_{t_i}^{\Delta_i}(t)(\sigma_i(t_i)-s_i)\Bigr|
   \leq\veps_i |\sigma_i(t_i)-s_i| \tx{for all} s_i\in U_i.
\end{multline}
We call $f_{t_i}^{\Delta_i}(t)$ the {\em partial delta derivative}
(or {\em partial Hilger derivative}) of $f$ with respect to $t_i$ at $t$.
We say that $f$ is {\em partial delta differentiable}
(or {\em partial Hilger differentiable}) with respect to
$t_i$ in $\Lambda_i^{\kappa_i n}$
if $f^{\Delta_i}_{t_i}(t)$ exists for all $t\in\Lambda_i^{\kappa_i n}$.
The function $f_{t_i}^{\Delta_i}:\Lambda_i^{\kappa_i n}\to\R$ is said
to be the {\em partial delta derivative}\vbox{\index{Partial Delta Derivative}}
(or {\em partial Hilger derivative})\vbox{\index{Partial Hilger Derivative}}
with respect to $t_i$ of $f$ in $\Lambda_i^{\kappa_i n}$.
\end{definition}

The partial delta derivative is well defined. For the properties of the partial delta derivative we refer the reader to \cite{georgiev}.

\begin{definition}
For a function $f:\Lambda^n\to\R$,
we shall talk about the {\em second-order partial delta derivative}
with\vbox{\index{Partial Delta Derivative!second-order}}
respect to $t_i$ and $t_j$, $i,j\in\{1,2,\ldots,n\}$,
$f_{t_i t_j}^{\Delta_i\Delta_j}$,
provided $f_{t_i}^{\Delta_i}$ is partial delta differentiable
with respect to $t_j$ on
$\Lambda_{ij}^{\kappa_i\kappa_jn}
 =\left(\Lambda_i^{\kappa_i n}\right)_j^{\kappa_j n}$
with partial delta derivative
$$ f_{t_it_j}^{\Delta_i\Delta_j}
   =\left(f_{t_i}^{\Delta_i}\right)_{t_j}^{\Delta_j}:
   \Lambda_{ij}^{\kappa_i\kappa_jn}\to\R. $$
For $i=j$, we will write
$$ f_{t_it_i}^{\Delta_i\Delta_i}=f_{t_i^2}^{\Delta_i^2}. $$
Similarly, we\vbox{\index{Partial Delta Derivative!higher-order}}
define {\em higher-order partial delta derivatives}
$$ f_{t_it_j\dots t_l}^{\Delta_i\Delta_j\ldots\Delta_l}:
   \Lambda_{ij\ldots l}^{\kappa_i\kappa_j\ldots\kappa_l n}\to\R. $$
For $t\in\Lambda^n$, we define
$$ \sigma^2(t)=\sigma(\sigma(t))
   =(\sigma_1(\sigma_1(t_1)),\sigma_2(\sigma_2(t_2)),\ldots,
              \sigma_n(\sigma_n(t_n))) $$.
\end{definition}

Now we will introduce the conception for multiple integration on time scales.
Suppose $a_i<b_i$ are points in $\T_i$ and
$[a_i,b_i)$ is the half-closed bounded interval in $\T_i$, $i\in\{1,\ldots,n\}$.
Let us introduce a ``rectangle'' in
$\Lambda^n=\T_1\times\T_2\times\ldots\times\T_n$ by
\begin{eqnarray*}
   R
   &=& [a_1,b_1)\times[a_2,b_2)\times\ldots[a_n,b_n)\neu
   &=& \left\{(\tee):\; t_i\in[a_i,b_i),\; i=1,2,\ldots,n\right\}.
\end{eqnarray*}
Let
$$ a_i=t_i^0<t_i^1<\ldots<t_i^{k_i}=b_i. $$

\begin{definition}\vbox{\index{Partition of Interval}}
We call the collection of intervals
$$ P_i=\left\{[t_i^{j_i-1},t_i^{j_i}):\; j_i=1,\ldots,k_i\right\},\quad
   i=1,2,\ldots,n, $$
a {\em $\Delta_i$-partition} of $[a_i,b_i)$ and denote the set of
all $\Delta_i$-partitions of $[a_i,b_i)$ by $P_i([a_i,b_i))$.
\end{definition}

\begin{definition}
Let
\begin{equation}\label{60}
\begin{array}{c}
   R_{j_1 j_2\ldots j_n}
   =[t_1^{j_1-1},t_1^{j_1})\times[t_2^{j_2-1},t_2^{j_2})\times\ldots
    \times[t_n^{j_n-1},t_n^{j_n})\neu
   1\leq j_i\leq k_i,\quad i=1,2,\ldots,n.
\end{array}
\end{equation}
We call the collection
\begin{equation}\label{61}
   P=\left\{R_{j_1j_2\ldots j_n}:\; 1\leq j_i\leq k_i,\; i=1,2,\ldots,n\right\}
\end{equation}
a {\em $\Delta$-partition}\vbox{\index{Partition of Rectangle}}
of $R$,
generated by the $\Delta_i$-partitions $P_i$ of $[a_i,b_i)$, and we write
$$ P=P_1\times P_2\times\ldots\times P_n. $$
The set of all $\Delta$-partitions of $R$ is denoted by $\cP(R)$.
Moreover, for a bounded function $f:R\to\R$, we set
\begin{eqnarray*}
   M
   &=& \sup\{f(\tee):\; (\tee)\in R\},\neu
   m
   &=& \inf\{f(\tee):\; (\tee)\in R\},\neu
   M_{j_1j_2\ldots j_n}
   &=& \sup\{f(\tee):\;
       (\tee)\in R_{j_1j_2\ldots j_n}\},\neu
   m_{j_1j_2\ldots j_n}
   &=& \inf\{f(\tee):\;
       (\tee)\in R_{j_1j_2\ldots j_n}\}.
\end{eqnarray*}
\end{definition}

\begin{definition}
The {\em upper Darboux $\Delta$-sum}\vbox{\index{Darboux $\Delta$-sum!upper}}
$U(f,P)$ and
the {\em lower Darboux $\Delta$-sum}\vbox{\index{Darboux $\Delta$-sum!lower}}
$L(f,P)$
with respect to $P$ are defined by
$$ U(f,P)=\sum_{j_1=1}^{k_1}\sum_{j_2=1}^{k_2}\ldots\sum_{j_n=1}^{k_n}
   M_{j_1j_2\ldots j_n}(t_1^{j_1}-t_1^{j_1-1})(t_2^{j_2}-t_2^{j_2-1})\ldots
   (t_n^{j_n}-t_n^{j_n-1}) $$
and
$$ L(f,P)=\sum_{j_1=1}^{k_1}\sum_{j_2=1}^{k_2}\ldots\sum_{j_n=1}^{k_n}
   m_{j_1j_2\ldots j_n}(t_1^{j_1}-t_1^{j_1-1})(t_2^{j_2}-t_2^{j_2-1})\ldots
   (t_n^{j_n}-t_n^{j_n-1}). $$
\end{definition}

\begin{definition}
The {\em upper Darboux $\Delta$-integral}\vbox{\index{Darboux $\Delta$-integral!upper}}
$U(f)$ of $f$ over $R$ and
the {\em lower Darboux $\Delta$-integral}\vbox{\index{Darboux $\Delta$-integral!lower}}
$L(f)$ of $f$ over $R$ are defined by
$$ U(f)=\inf\{U(f,P):P\in\cP(R)\} \tx {and} L(f)=\sup\{L(f,P):P\in\cP(R)\}. $$
\end{definition}

We have  that $U(f)$ and $L(f)$ are finite real numbers.

\begin{definition}\label{definition37}
We say that $f$ is {\em $\Delta$-integrable} over $R$ provided $L(f)=U(f)$.
In this case, we write
$$ \int_Rf(\tee)\deltee $$
for this common value.
We call this integral
the {\em Darboux $\Delta$-integral}.\vbox{\index{Darboux $\Delta$-integral}}
\end{definition}

\section{Pachpatte's Inequalities}
Let $\mathbb{T}_1$ and $\mathbb{T}_2$ be time scales with forward jump operators and delta differentiation operators $\sigma_1$, $\sigma_2$ and $\Delta_1$, $\Delta_2$ respectively, which contain positive numbers and $0\in \mathbb{T}_1$, $0\in \mathbb{T}_2$.
\begin{theorem}[Pachpatte's Inequality]
\label{theorem6.4}  Let $u, p, q \in \mathcal{C}((\mathbb{R}_+\bigcap \mathbb{T}_1)\times (\mathbb{R}_+\bigcap \mathbb{T}_2))$ be nonnegative functions,
$$k(\cdot, \cdot, s_1, s_2)\in \mathcal{C}^2\left((\mathbb{R}_+\bigcap \mathbb{T}_1)\times (\mathbb{R}_+\bigcap \mathbb{T}_2)\right),\quad (s_1, s_2)\in (\mathbb{R}_+\bigcap \mathbb{T}_1)\times (\mathbb{R}_+\bigcap \mathbb{T}_2),$$ and its partial derivatives
\begin{eqnarray*}
k_{t_1}^{\Delta_1}(t_1, t_2, s_1, s_2), &k_{t_1}^{\Delta_1}(\sigma_1(t_1), t_2, s_1, s_2),&\\ \\
k_{t_2}^{\Delta_2}(t_1, t_2, s_1, s_2), & k_{t_2}^{\Delta_2}(t_1, \sigma_2(t_2), s_1, s_2),&\\ \\
k_{t_1t_2}^{\Delta_1\Delta_2}(t_1, t_2, s_1, s_2), &(t_1, t_2), (s_1, s_2)\in (\mathbb{R}_+\bigcap \mathbb{T}_1)\times (\mathbb{R}_+\bigcap \mathbb{T}_2)&,
\end{eqnarray*}
be nonnegative  functions. If
\begin{equation*}
u(t_1, t_2) \leq p(t_1, t_2) +q(t_1, t_2) \int_0^{t_1}\int_0^{t_2} k(t_1, t_2, s_1, s_2) u(s_1, s_2)\Delta_2s_2\Delta_1s_1,
\end{equation*}
$(t_1, t_2)\in (\mathbb{R}_+\bigcap \mathbb{T}_1)\times (\mathbb{R}_+\bigcap \mathbb{T}_2)$, then
\begin{equation*}
u(t_1, t_2) \leq p(t_1, t_2)+q(t_1, t_2) A(t_1, t_2) e_c(t_2, 0),
\end{equation*}
$(t_1, t_2)\in (\mathbb{R}_+\bigcap \mathbb{T}_1)\times (\mathbb{R}_+\bigcap \mathbb{T}_2)$, where
\begin{eqnarray*}
a(t_1, t_2)&=& k(\sigma_1(t_1), \sigma_2(t_2), t_1, t_2) p(t_1, t_2)\\ \\
&&+\int_0^{t_2} k_{t_2}^{\Delta_2}(\sigma_1(t_1), t_2, t_1, s_2) p(t_1, s_2) \Delta_2 s_2,\\ \\
&&+\int_0^{t_1} k_{t_1}^{\Delta_1}(t_1, \sigma_2(t_2), s_1, t_2) p(s_1, t_2) \Delta_1 s_1\\ \\
&&+\int_0^{t_1}\int_0^{t_2} k_{t_1t_2}^{\Delta_1\Delta_2}(t_1, t_2, s_1, s_2) p(s_1, s_2) \Delta_2 s_2 \Delta_1 s_1,\\ \\
b(t_1, t_2)&=& k(\sigma_1(t_1), \sigma_2(t_2), t_1, t_2) q(t_1, t_2)\\ \\
&&+\int_0^{t_2} k_{t_2}^{\Delta_2}(\sigma_1(t_1), t_2, t_1, s_2) q(t_1, s_2) \Delta_2 s_2\\ \\
&&+\int_0^{t_1} k_{t_1}^{\Delta_1}(t_1, \sigma_2(t_2), s_1, t_2) q(s_1, t_2) \Delta_1 s_1\\ \\
&&+\int_0^{t_1}\int_0^{t_2} k_{t_1t_2}^{\Delta_1\Delta_2}(t_1, t_2, s_1, s_2) q(s_1, s_2) \Delta_2 s_2 \Delta_1 s_1,\\ \\
A(t_1, t_2)&=& \int_0^{t_1}\int_0^{t_2} a(s_1, s_2) \Delta_2 s_2 \Delta_1 s_1,\\ \\
c(t_1, t_2)&=& \int_0^{t_2} b(t_1, s_2) \Delta_2 s_2,
\end{eqnarray*}
$(t_1, t_2)\in (\mathbb{R}_+\bigcap \mathbb{T}_1)\times (\mathbb{R}_+\bigcap \mathbb{T}_2)$.
\end{theorem}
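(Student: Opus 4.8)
The plan is to linearize the hypothesis by introducing the auxiliary function
$$z(t_1,t_2)=\int_0^{t_1}\int_0^{t_2} k(t_1,t_2,s_1,s_2)u(s_1,s_2)\Delta_2 s_2\Delta_1 s_1,$$
so that the assumption reads $u\le p+qz$, with the boundary data $z(0,t_2)=z(t_1,0)=0$. Because $k\ge0$, $u\ge0$ and the listed partial derivatives of $k$ are nonnegative, a short computation of $z_{t_1}^{\Delta_1}$ and $z_{t_2}^{\Delta_2}$ (each splitting into a nonnegative boundary integral plus a nonnegative double integral) shows that $z$ is nonnegative and nondecreasing in each variable. This monotonicity is the device I will use repeatedly to replace $z(s_1,s_2)$ by $z(t_1,t_2)$ under the integral signs.

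The technical core is to compute the mixed second partial delta derivative $z_{t_1t_2}^{\Delta_1\Delta_2}$. First I would apply the time-scale Leibniz rule (see \cite{georgiev}) in the variable $t_1$: this differentiates both the outer limit and the integrand, producing a boundary term evaluated at $\sigma_1(t_1)$. Then I would differentiate the result in $t_2$, again picking up a boundary contribution at $\sigma_2(t_2)$. The outcome is the sum of the diagonal term $k(\sigma_1(t_1),\sigma_2(t_2),t_1,t_2)u(t_1,t_2)$, the two single integrals of $k_{t_2}^{\Delta_2}(\sigma_1(t_1),t_2,t_1,s_2)u(t_1,s_2)$ and $k_{t_1}^{\Delta_1}(t_1,\sigma_2(t_2),s_1,t_2)u(s_1,t_2)$, and the double integral of $k_{t_1t_2}^{\Delta_1\Delta_2}(t_1,t_2,s_1,s_2)u(s_1,s_2)$. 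Substituting $u\le p+qz$ into each of these four nonnegative terms and bounding every occurrence of $z$ by $z(t_1,t_2)$ via monotonicity, I recognize exactly the functions $a$ and $b$ of the statement and obtain the key inequality
$$z_{t_1t_2}^{\Delta_1\Delta_2}(t_1,t_2)\le a(t_1,t_2)+b(t_1,t_2)\,z(t_1,t_2).$$

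From here I would lower the order by integration. Integrating the key inequality in $t_2$ over $[0,t_2]$, using $z_{t_1}^{\Delta_1}(t_1,0)=0$ and the bound $z(t_1,s_2)\le z(t_1,t_2)$, gives $z_{t_1}^{\Delta_1}(t_1,t_2)\le\int_0^{t_2}a(t_1,s_2)\Delta_2 s_2+c(t_1,t_2)\,z(t_1,t_2)$, since $\int_0^{t_2}b(t_1,s_2)\Delta_2 s_2=c(t_1,t_2)$. Integrating once more in $t_1$ over $[0,t_1]$ and using $z(0,t_2)=0$ produces the first-order integral inequality
$$z(t_1,t_2)\le A(t_1,t_2)+\int_0^{t_1} c(s_1,t_2)\,z(s_1,t_2)\,\Delta_1 s_1.$$
Setting $y(t_1)=\int_0^{t_1}c(s_1,t_2)z(s_1,t_2)\Delta_1 s_1$ converts this into the differential inequality $y^{\Delta_1}\le c\,A+c\,y$ with $y(0)=0$; note that $c\ge0$ forces $1+\mu_1(t_1)c(t_1,t_2)>0$, so $c\in\mathcal{R}^{+}$ and the Comparison Lemma \ref{lemma2.1} applies. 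Since $a\ge0$ makes $A$ nondecreasing, the comparison lemma delivers the exponential factor $e_c$ and yields $z\le A\,e_c$, whence $u\le p+qz\le p+q\,A\,e_c$, the asserted bound.

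The main obstacle is the second paragraph: the careful iterated application of the Leibniz rule, tracking the forward shifts $\sigma_1(t_1)$ and $\sigma_2(t_2)$ together with the diagonal boundary contributions, is precisely what manufactures the intricate coefficients $a$ and $b$, and one must verify that these four terms reassemble exactly into the functions defined in the statement. Once $z_{t_1t_2}^{\Delta_1\Delta_2}\le a+bz$ is established, the remaining steps are the routine Gronwall-type reduction to the one-dimensional comparison lemma, the only subtlety there being the bookkeeping of the boundary values and the regressivity check $c\in\mathcal{R}^{+}$.
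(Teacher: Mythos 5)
Your proposal is correct and follows the paper's strategy almost verbatim up to the key inequality: the same auxiliary function $z$, the same observation that $z$ is nondecreasing in each variable, the same iterated Leibniz computation of $z_{t_1t_2}^{\Delta_1\Delta_2}$ producing the four terms, and the same monotonicity trick yielding $z_{t_1t_2}^{\Delta_1\Delta_2}\leq a+b\,z$. You diverge at the Gronwall stage, and your route is arguably the better one. The paper integrates twice without using monotonicity again, arrives at the two-dimensional inequality $z(t_1,t_2)\leq A(t_1,t_2)+\int_0^{t_1}\int_0^{t_2}b(s_1,s_2)z(s_1,s_2)\Delta_2 s_2\Delta_1 s_1$, and then invokes ``Theorem \ref{theorem6.3}'' --- a Wendroff-type two-dimensional Gronwall lemma that is cited but nowhere stated or proved in this paper (a dangling reference). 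You instead use $z(t_1,s_2)\leq z(t_1,t_2)$ after the first integration to collapse the inner term to $c(t_1,t_2)z(t_1,t_2)$, obtain the one-variable inequality $z\leq A+\int_0^{t_1}c(s_1,t_2)z(s_1,t_2)\Delta_1 s_1$ with $t_2$ frozen, and close it with Lemma \ref{lemma2.1} via $y(t_1)=\int_0^{t_1}c\,z\,\Delta_1 s_1$, using $c\geq 0\Rightarrow c\in\mathcal{R}^+$ and the monotonicity of $A$ in $t_1$ to extract $z\leq A\,e_c(t_1,0)$. This makes the argument self-contained where the paper's is not. Two small points: your conclusion carries the factor $e_c(t_1,0)$, which matches the paper's own intermediate display and Corollary \ref{corollary6.5}; the $e_c(t_2,0)$ appearing in the theorem's stated conclusion is an internal inconsistency of the paper, not a defect of your argument. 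Also, your monotonicity claim for $z$ tacitly uses $k\geq 0$, which the hypotheses only assert for the partial derivatives of $k$; the paper makes the same tacit assumption, so this is a shared, minor gap in the statement rather than in your proof.
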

\begin{proof}
Let
\begin{equation*}
z(t_1, t_2)= \int_0^{t_1}\int_0^{t_2} k(t_1, t_2, s_1, s_2) u(s_1, s_2) \Delta_2 s_2\Delta_1 s_1,\quad (t_1, t_2)\in (\mathbb{R}_+\bigcap \mathbb{T}_1)\times (\mathbb{R}_+\bigcap \mathbb{T}_2).
\end{equation*}
Then
\begin{equation*}
u(t_1, t_2)\leq p(t_1, t_2)+q(t_1, t_2) z(t_1, t_2),\quad (t_1, t_2)\in (\mathbb{R}_+\bigcap \mathbb{T}_1)\times (\mathbb{R}_+\bigcap \mathbb{T}_2),
\end{equation*}
and $z(t_1, t_2)$ is a nondecreasing function in each  variable $t_1$, $t_2$, $(t_1, t_2)\in (\mathbb{R}_+\bigcap \mathbb{T}_1)\times (\mathbb{R}_+\bigcap \mathbb{T}_2)$. Then
\begin{eqnarray*}
z(0, 0)&=& 0,\\ \\
z_{t_1}^{\Delta_1}(t_1, t_2)&=& \int_0^{t_2} k(\sigma_1(t_1), t_2, t_1, s_2) u(t_1, s_2)\Delta_2 s_2\\ \\
&&+ \int_0^{t_1} \int_0^{t_2} k_{t_1}^{\Delta_1}(t_1, t_2, s_1, s_2) u(s_1, s_2) \Delta_2 s_2 \Delta_1 s_1,\\ \\
z_{t_1t_2}^{\Delta_1\Delta_2}(t_1, t_2)&=& k(\sigma_1(t_1), \sigma_2(t_2), t_1, t_2) u(t_1, t_2) \\ \\
&&+ \int_0^{t_2} k_{t_2}^{\Delta_2}(\sigma_1(t_1), t_2, t_1, s_2) u(t_1, s_2)\Delta_2s_2\\ \\
&&+\int_0^{t_1} k_{t_1}^{\Delta_1}(t_1, \sigma_2(t_2), s_1, t_2) u(s_1, t_2) \Delta_1 s_1\\ \\
&&+\int_0^{t_1}\int_0^{t_2} k_{t_1t_2}^{\Delta_1\Delta_2}(t_1, t_2, s_1, s_2) u(s_1, s_2)\Delta_2 s_2 \Delta_1 s_1\\ \\
&\leq& k(\sigma_1(t_1), \sigma_2(t_2), t_1, t_2)p(t_1, t_2)\\ \\
&&+k(\sigma_1(t_1), \sigma_2(t_2), t_1, t_2) q(t_1, t_2) z(t_1, t_2)\\ \\
&&+\int_0^{t_2} k_{t_2}^{\Delta_2}(\sigma_1(t_1), t_2, t_1, s_2) p(t_1, s_2) \Delta_2 s_2\\ \\
&&+\int_0^{t_2} k_{t_2}^{\Delta_2}(\sigma_1(t_1), t_2, t_1, s_2) q(t_1, s_2)z(t_1, s_2)\Delta_2 s_2\\ \\
&&+\int_0^{t_1} k_{t_1}^{\Delta_1}(t_1, \sigma_2(t_2), s_1, t_2) p(s_1, t_2) \Delta_1 s_1
\end{eqnarray*}
\begin{eqnarray*}
&&+\int_0^{t_1} k_{t_1}^{\Delta_1}(t_1, \sigma_2(t_2), s_1, t_2)q(s_1, t_2) z(s_1, t_2) \Delta_1 s_1\\ \\
&&+ \int_0^{t_1}\int_0^{t_2} k_{t_1t_2}^{\Delta_1\Delta_2}(t_1, t_2, s_1, s_2) p(s_1, s_2) \Delta_2 s_2 \Delta_1 s_1\\ \\
&&+\int_0^{t_1}\int_0^{t_2} k_{t_1t_2}^{\Delta_1\Delta_2} (t_1, t_2, s_1, s_2) q(s_1, s_2) z(s_1, s_2) \Delta_2 s_2\Delta_1 s_1
\\ \\
&\leq& a(t_1, t_2)\\ \\
&&+ k(\sigma_1(t_1), \sigma_2(t_2), t_1, t_2) q(t_1, t_2) z(t_1, t_2)\\ \\
&&+\left(\int_0^{t_2} k_{t_2}^{\Delta_2}(\sigma_1(t_1), t_2, t_1, s_2) q(t_1, s_2)\Delta_2 s_2\right)z(t_1, t_2)\\ \\
&&+\left(\int_0^{t_1} k_{t_1}^{\Delta_1}(t_1, \sigma_2(t_2), s_1, t_2)q(s_1, t_2) \Delta_1 s_1\right)z(t_1, t_2)\\ \\
&&+\left(\int_0^{t_1}\int_0^{t_2} k_{t_1t_2}^{\Delta_1\Delta_2} (t_1, t_2, s_1, s_2) q(s_1, s_2) \Delta_2 s_2\Delta_1 s_1\right)z(t_1, t_2)\\ \\
&=& a(t_1, t_2)+b(t_1, t_2) z(t_1, t_2),\quad  (t_1, t_2)\in (\mathbb{R}_+\bigcap \mathbb{T}_1)\times (\mathbb{R}_+\bigcap \mathbb{T}_2).
\end{eqnarray*}
Hence, using that
\begin{eqnarray*}
z_{t_1}^{\Delta_1}(t_1, 0)&=& 0,\\ \\
z(0, t_2)&=& 0,\quad (t_1, t_2)\in (\mathbb{R}_+\bigcap \mathbb{T}_1)\times (\mathbb{R}_+\bigcap \mathbb{T}_2),
\end{eqnarray*}
we obtain
\begin{eqnarray*}
z_{t_1}^{\Delta_1}(t_1, t_2)-z_{t_1}^{\Delta_1}(t_1, 0)&\leq& \int_0^{t_2} a(t_1, s_2) \Delta_2 s_2+\int_0^{t_2} b(t_1, s_2) z(t_1, s_2)\Delta_2 s_2,\\ \\
z_{t_1}^{\Delta_1}(t_1, t_2)&\leq& \int_0^{t_2} a(t_1, s_2) \Delta_2 s_2+\int_0^{t_2} b(t_1, s_2) z(t_1, s_2)\Delta_2 s_2,
\end{eqnarray*}
\begin{eqnarray*}
z(t_1, t_2)-z(0, t_2) &\leq& \int_0^{t_1} \int_0^{t_2} a(s_1, s_2) \Delta_2s_2 \Delta_1 s_1\\ \\
&&+\int_0^{t_1}\int_0^{t_2} b(s_1, s_2) z(s_1, s_2) \Delta_2 s_2\Delta_1 s_1,\\ \\
z(t_1, t_2)&\leq& A(t_1, t_2)+\int_0^{t_1}\int_0^{t_2} b(s_1, s_2) z(s_1, s_2)\Delta_2 s_2\Delta_1 s_1,
\end{eqnarray*}
$(t_1, t_2)\in (\mathbb{R}_+\bigcap \mathbb{T}_1)\times (\mathbb{R}_+\bigcap \mathbb{T}_2)$.  Now we apply Theorem \ref{theorem6.3} and we get
\begin{equation*}
z(t_1, t_2)\leq A(t_1, t_2) e_c(t_1, 0),\quad (t_1, t_2)\in (\mathbb{R}_+\bigcap \mathbb{T}_1)\times (\mathbb{R}_+\bigcap \mathbb{T}_2),
\end{equation*}
whereupon
\begin{eqnarray*}
u(t_1, t_2)&\leq& p(t_1, t_2)+q(t_1, t_2) z(t_1, t_2)\\ \\
&\leq& p(t_1, t_2) +q(t_1, t_2) A(t_1, t_2) e_c(t_2, 0),
\end{eqnarray*}
$(t_1, t_2)\in (\mathbb{R}_+\bigcap \mathbb{T}_1)\times (\mathbb{R}_+\bigcap \mathbb{T}_2)$. This completes the proof.
\end{proof}
\begin{corollary}
\label{corollary6.5} Let $u, p, q, k\in \mathcal{C}((\mathbb{R}_+\bigcap \mathbb{T}_1)\times (\mathbb{R}_+\bigcap \mathbb{T}_2))$ be nonnegative functions. If
\begin{equation*}
u(t_1, t_2) \leq p(t_1, t_2)+q(t_1, t_2)\int_0^{t_1}\int_0^{t_2} k(s_1, s_2) u(s_1, s_2) \Delta_2 s_2 \Delta_1 s_1,
\end{equation*}
$(t_1, t_2)\in (\mathbb{R}_+\bigcap \mathbb{T}_1)\times (\mathbb{R}_+\bigcap \mathbb{T}_2)$, then
\begin{equation*}
u(t_1, t_2)\leq p(t_1, t_2)+q(t_1, t_2) A(t_1, t_2)e_c(t_1, 0), \quad (t_1, t_2)\in (\mathbb{R}_+\bigcap \mathbb{T}_1)\times (\mathbb{R}_+\bigcap \mathbb{T}_2),
\end{equation*}
where
\begin{eqnarray*}
a(t_1, t_2)&=& k(t_1, t_2)p(t_1, t_2),\\ \\
b(t_1, t_2)&=& k(t_1, t_2) q(t_1, t_2),\\ \\
A(t_1, t_2)&=& \int_0^{t_1}\int_0^{t_2} a(s_1, s_2) \Delta_2 s_2\Delta_1 s_1,\\ \\
c(t_1, t_2)&=& \int_0^{t_2} b(t_1, s_2)\Delta_2 s_2,\quad (t_1, t_2)\in (\mathbb{R}_+\bigcap \mathbb{T}_1)\times (\mathbb{R}_+\bigcap \mathbb{T}_2).
\end{eqnarray*}
\end{corollary}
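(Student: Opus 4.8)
The plan is to derive this corollary as a direct specialization of Theorem \ref{theorem6.4}. The hypothesis here is exactly the integral inequality of that theorem in the case where the four-variable kernel does not depend on its first two arguments. Concretely, I would introduce
$$\tilde{k}(t_1, t_2, s_1, s_2) := k(s_1, s_2),\quad (t_1,t_2),(s_1,s_2)\in(\mathbb{R}_+\bigcap \mathbb{T}_1)\times (\mathbb{R}_+\bigcap \mathbb{T}_2),$$
a kernel that is constant in $(t_1, t_2)$, and then check that $\tilde{k}$ meets all the regularity and sign requirements of Theorem \ref{theorem6.4}.

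First I would verify the hypotheses. Since $\tilde{k}(\cdot, \cdot, s_1, s_2) = k(s_1, s_2)$ is constant in $(t_1, t_2)$, it is trivially of class $\mathcal{C}^2$ in those variables, and every partial delta derivative appearing in Theorem \ref{theorem6.4} vanishes identically:
\begin{equation*}
\tilde{k}_{t_1}^{\Delta_1} \equiv 0, \qquad \tilde{k}_{t_2}^{\Delta_2} \equiv 0, \qquad \tilde{k}_{t_1 t_2}^{\Delta_1 \Delta_2} \equiv 0.
\end{equation*}
These are nonnegative, and $\tilde{k} = k \geq 0$ by assumption, so all the standing assumptions of Theorem \ref{theorem6.4} hold. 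Next I would read off the auxiliary functions $a$ and $b$ produced by Theorem \ref{theorem6.4} for this kernel. Because the three integral terms in each of $a$ and $b$ carry a factor of one of the vanishing derivatives, only the leading term survives, and since $\tilde{k}(\sigma_1(t_1), \sigma_2(t_2), t_1, t_2) = k(t_1, t_2)$ I obtain
\begin{equation*}
a(t_1, t_2) = k(t_1, t_2) p(t_1, t_2), \qquad b(t_1, t_2) = k(t_1, t_2) q(t_1, t_2),
\end{equation*}
which are precisely the functions $a$ and $b$ in the statement of the corollary. The definitions of $A$ and $c$ are identical in both results, so no further reduction is needed.

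The conclusion then follows verbatim from Theorem \ref{theorem6.4}: substituting the specialized kernel into its conclusion yields $u(t_1, t_2) \leq p(t_1, t_2) + q(t_1, t_2) A(t_1, t_2) e_c(t_1, 0)$, which is the desired bound. There is essentially no mathematical obstacle here; the only care needed is the bookkeeping check that the three derivative-weighted integrals in $a$ and $b$ really drop out and that the surviving boundary evaluation $\tilde{k}(\sigma_1(t_1), \sigma_2(t_2), t_1, t_2)$ collapses to $k(t_1, t_2)$. Once those two points are confirmed, the corollary is immediate.
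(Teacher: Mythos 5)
Your proposal is correct and is exactly the paper's intended route: the paper states Corollary \ref{corollary6.5} without proof, as an immediate specialization of Theorem \ref{theorem6.4} to the kernel $\tilde{k}(t_1,t_2,s_1,s_2)=k(s_1,s_2)$, whose $(t_1,t_2)$-derivatives vanish so that only the terms $a=kp$ and $b=kq$ survive. The one caveat is that the conclusion does not follow quite ``verbatim'': the displayed conclusion of Theorem \ref{theorem6.4} reads $e_c(t_2,0)$ while the corollary reads $e_c(t_1,0)$ (an internal inconsistency of the paper, since the theorem's own proof derives $z(t_1,t_2)\leq A(t_1,t_2)e_c(t_1,0)$), so to land on the corollary's stated form you must cite the bound actually established inside that proof rather than the theorem's final display.
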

\begin{theorem}
\label{theorem6.6} Let $c_1$ and $c_2$ be nonnegative constants, $u, v, h_i\in \mathcal{C}( (\mathbb{R}_+\bigcap \mathbb{T}_1)\times (\mathbb{R}_+\bigcap \mathbb{T}_2))$, $i\in\{1, 2, 3, 4\}$, be nonnegative functions. If
\begin{eqnarray*}
u(t_1, t_2)&\leq& c_1+\int_0^{t_1}\int_0^{t_2} h_1(s_1, s_2)u(s_1, s_2)\Delta_2s_2\Delta_1 s_1\\ \\
&&+\int_0^{t_1}\int_0^{t_2} h_2(s_1, s_2) v(s_1, s_2)\Delta_2s_2\Delta_1 s_1,\\ \\
v(t_1, t_2)&\leq& c_2+\int_0^{t_1}\int_0^{t_2} h_3(s_1, s_2) u(s_1, s_2)\Delta_2s_2\Delta_1 s_1\\ \\
&&+\int_0^{t_1}\int_0^{t_2} h_4(s_1, s_2)v(s_1, s_2)\Delta_2s_2\Delta_1 s_1,
\end{eqnarray*}
$(t_1, t_2)\in (\mathbb{R}_+\bigcap \mathbb{T}_1)\times (\mathbb{R}_+\bigcap \mathbb{T}_2)$, then
\begin{equation*}
u(t_1, t_2)+v(t_1, t_2)\leq c_3+A(t_1, t_2) e_c(t_1, 0),\quad (t_1, t_2)\in (\mathbb{R}_+\bigcap \mathbb{T}_1)\times (\mathbb{R}_+\bigcap \mathbb{T}_2),
\end{equation*}
where
\begin{eqnarray*}
c_3&=& c_1+c_2,\\ \\
H(t_1, t_2)&=& \max\{h_1(t_1, t_2)+h_3(t_1, t_2), h_2(t_1, t_2)+h_4(t_1, t_2)\},\\ \\
A(t_1, t_2)&=& c_3\int_0^{t_1}\int_0^{t_2} H(s_1, s_2) \Delta_2 s_2\Delta_1 s_1,\\ \\
c(t_1, t_2)&=& \int_0^{t_2} H(t_1, s_2)\Delta_2s_2,\quad (t_1, t_2)\in (\mathbb{R}_+\bigcap \mathbb{T}_1)\times (\mathbb{R}_+\bigcap \mathbb{T}_2).
\end{eqnarray*}
\end{theorem}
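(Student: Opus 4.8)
The plan is to collapse the coupled system into a single scalar integral inequality for the sum $w = u + v$, and then invoke Corollary \ref{corollary6.5}. First I would add the two hypotheses termwise. On the left this produces $u(t_1,t_2)+v(t_1,t_2)$; on the right the constant terms combine to $c_3 = c_1+c_2$, while the four double integrals regroup according to whether they multiply $u$ or $v$. This yields
$$w(t_1,t_2) \le c_3 + \int_0^{t_1}\int_0^{t_2}\bigl[(h_1+h_3)(s_1,s_2)\,u(s_1,s_2) + (h_2+h_4)(s_1,s_2)\,v(s_1,s_2)\bigr]\Delta_2 s_2\,\Delta_1 s_1.$$

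Next I would use the definition of $H$ as the pointwise maximum of $h_1+h_3$ and $h_2+h_4$. Because $u$ and $v$ are nonnegative, $(h_1+h_3)u \le H u$ and $(h_2+h_4)v \le H v$, so the bracketed integrand is at most $H(u+v)=Hw$. Consequently
$$w(t_1,t_2)\le c_3 + \int_0^{t_1}\int_0^{t_2} H(s_1,s_2)\,w(s_1,s_2)\,\Delta_2 s_2\,\Delta_1 s_1,$$
which is exactly the scalar hypothesis of Corollary \ref{corollary6.5}, taken with $u$ replaced by $w$, with $p\equiv c_3$, $q\equiv 1$, and $k = H$.

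Finally I would read off the conclusion. With these substitutions the auxiliary functions of Corollary \ref{corollary6.5} become $a = c_3 H$ and $b = H$, so that $A = c_3\int_0^{t_1}\int_0^{t_2} H\,\Delta_2 s_2\,\Delta_1 s_1$ and $c = \int_0^{t_2} H(t_1,s_2)\,\Delta_2 s_2$, matching the $A$ and $c$ in the statement verbatim. The corollary then yields $w \le c_3 + A\,e_c(t_1,0)$, which is precisely the asserted bound on $u+v$.

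I do not anticipate a serious obstacle, since the whole argument is a reduction rather than a fresh estimate. The one point that genuinely deserves care is the step replacing the two distinct coefficient sums by the single majorant $H$: this is valid only because $u,v\ge 0$, so the nonnegativity of the \emph{unknowns} (and not merely of the kernels) is essential and must be invoked explicitly. Everything after that step is a direct citation of Corollary \ref{corollary6.5}.
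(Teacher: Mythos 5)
Your proposal is correct and follows essentially the same route as the paper's own proof: form $f=u+v$, add the two hypotheses, majorize both coefficient sums by $H$ using the nonnegativity of $u$ and $v$, and then apply Corollary \ref{corollary6.5} with $p\equiv c_3$, $q\equiv 1$, $k=H$. Your explicit identification of the substitutions and of why $u,v\geq 0$ is needed is a welcome clarification, but the argument is the same.
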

\begin{proof}
Let
\begin{equation*}
f(t_1, t_2)=u(t_1, t_2)+v(t_1, t_2),\quad (t_1, t_2)\in (\mathbb{R}_+\bigcap \mathbb{T}_1)\times (\mathbb{R}_+\bigcap \mathbb{T}_2).
\end{equation*}
Then
\begin{eqnarray*}
f(t_1, t_2)&=& u(t_1, t_2)+v(t_1, t_2)\\ \\
&\leq&  c_1+\int_0^{t_1}\int_0^{t_2} h_1(s_1, s_2)u(s_1, s_2)\Delta_2s_2\Delta_1 s_1\\ \\
&&+\int_0^{t_1}\int_0^{t_2} h_2(s_1, s_2) v(s_1, s_2)\Delta_2s_2\Delta_1 s_1\\ \\
&&+ c_2+\int_0^{t_1}\int_0^{t_2} h_3(s_1, s_2) u(s_1, s_2)\Delta_2s_2\Delta_1 s_1\\ \\
&&+\int_0^{t_1}\int_0^{t_2} h_4(s_1, s_2)v(s_1, s_2)\Delta_2s_2\Delta_1 s_1
\end{eqnarray*}
\begin{eqnarray*}
&=& c_3+\int_0^{t_1}\int_0^{t_2} \left(h_1(s_1, s_2)+h_3(s_1, s_2)\right) u(s_1, s_2)\Delta_2 s_2\Delta_1 s_1\\ \\
&&+\int_0^{t_1}\int_0^{t_2}\left(h_2(s_1, s_2)+h_4(s_1, s_2)\right)v(s_1, s_2)\Delta_2 s_2 \Delta_1 s_1\\ \\
&\leq& c_3+\int_0^{t_1}\int_0^{t_2} H(s_1, s_2) u(s_1, s_2)\Delta_2s_2\Delta_1 s_1\\ \\
&&+\int_0^{t_1}\int_0^{t_2} H(s_1, s_2) v(s_1, s_2) \Delta_2 s_2\Delta_1 s_1\\ \\
&=& c_3+\int_0^{t_1}\int_0^{t_2}H(s_1, s_2)\left(u(s_1, s_2)+v(s_1, s_2)\right)\Delta_2 s_2\Delta_1 s_1\\ \\
&=& c_3+\int_0^{t_1}\int_0^{t_2} H(s_1, s_2) f(s_1, s_2)\Delta_2 s_2\Delta_1 s_1,
\end{eqnarray*}
$(t_1, t_2)\in (\mathbb{R}_+\bigcap \mathbb{T}_1)\times (\mathbb{R}_+\bigcap \mathbb{T}_2)$.  Hence and Corollary \ref{corollary6.5}, we obtain
\begin{eqnarray*}
u(t_1, t_2)+v(t_1, t_2)&=& f(t_1,t_2)\\ \\
&\leq& c_3+A(t_1, t_2) e_c(t_1, 0),\quad (t_1, t_2)\in (\mathbb{R}_+\bigcap \mathbb{T}_1)\times (\mathbb{R}_+\bigcap \mathbb{T}_2).
\end{eqnarray*}
This completes the proof.
\end{proof}
\begin{theorem}[Pachpatte's Inequality]
\label{theorem6.7} Let $u(t_1, t_2)\in \mathcal{C}^2((\mathbb{R}_+\bigcap \mathbb{T}_1)\times (\mathbb{R}_+\bigcap \mathbb{T}_2))$, $u_{t_1, t_2}^{\Delta_1\Delta_2}(t_1, t_2)$ be a nonnegative function and $c(t_1, t_2)$ be a nonnegative continuous function for $(t_1, t_2)\in (\mathbb{R}_+\bigcap \mathbb{T}_1)\times (\mathbb{R}_+\bigcap \mathbb{T}_2)$, and
\begin{equation*}
u(0, t_2)=u(t_1, 0)=0\quad for\quad (t_1, t_2)\in (\mathbb{R}_+\bigcap \mathbb{T}_1)\times (\mathbb{R}_+\bigcap \mathbb{T}_2).
\end{equation*}
Let also, $a\in \mathcal{C}^1(\mathbb{R}_+\bigcap \mathbb{T}_1)$, $b\in \mathcal{C}^1(\mathbb{R}_+\bigcap \mathbb{T}_2)$ be positive functions having derivatives such that
\begin{equation*}
a^{\Delta_1}(t_1)\geq 0,\quad t_1\in \mathbb{R}_+\bigcap \mathbb{T}_1,\quad b^{\Delta_2}(t_2)\geq 0,\quad t_2\in \mathbb{R}_+\bigcap \mathbb{T}_2.
\end{equation*}
If
\begin{equation*}
u_{t_1t_2}^{\Delta_1\Delta_2}(t_1, t_2)\leq a(t_1)+b(t_2) +\int_0^{t_1}\int_0^{t_2} c(s_1, s_2) \left(u(s_1, s_2)+u_{t_1t_2}^{\Delta_1\Delta_2}(s_1, s_2)\right)\Delta_2 s_2\Delta_1 s_1,
\end{equation*}
$(t_1, t_2)\in (\mathbb{R}_+\bigcap \mathbb{T}_1)\times (\mathbb{R}_+\bigcap \mathbb{T}_2)$,  then
\begin{equation*}
u(t_1, t_2)\leq \int_0^{t_1}\int_0^{t_2} h(s_1, s_2)\Delta_2s_2\Delta_1 s_1,\quad (t_1, t_2)\in (\mathbb{R}_+\bigcap \mathbb{T}_1)\times (\mathbb{R}_+\bigcap \mathbb{T}_2),
\end{equation*}
where
\begin{eqnarray*}
p(t_1, t_2)&=& \frac{a^{\Delta_1}(t_1)}{a(t_1)+b(0)}+\int_0^{t_2}\left(1+c(t_1, s_2)\right)\Delta_2 s_2,\\ \\
q(t_1, t_2)&=& \left(a(0)+b(t_2)\right)e_p(t_1, 0) c(t_1, t_2),\\ \\
h(t_1, t_2)&=& a(t_1)+b(t_2) +\int_0^{t_1}\int_0^{t_2} q(s_1, s_2)\Delta_2 s_2\Delta_1 s_1,
\end{eqnarray*}
$(t_1, t_2)\in (\mathbb{R}_+\bigcap \mathbb{T}_1)\times (\mathbb{R}_+\bigcap \mathbb{T}_2)$.
\end{theorem}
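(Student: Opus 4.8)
The plan is to reduce this second-order integro-dynamic inequality to a first-order linear dynamic inequality in the variable $t_1$ and then invoke the Comparison Lemma~\ref{lemma2.1}. First I would set $w=u_{t_1t_2}^{\Delta_1\Delta_2}$. Because $w\geq 0$ and $u(0,t_2)=u(t_1,0)=0$, integrating the mixed derivative twice yields $u(t_1,t_2)=\int_0^{t_1}\int_0^{t_2}w(s_1,s_2)\Delta_2s_2\Delta_1s_1$; in particular $u\geq 0$ and $u$ is nondecreasing in each variable. The hypothesis then reads $w\leq a(t_1)+b(t_2)+\int_0^{t_1}\int_0^{t_2}c\,(u+w)$. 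Introducing $V=u+w\geq 0$ and using $u=\int\int w\leq\int\int V$ (valid since $w\leq V$) together with the hypothesis, I would combine the two double integrals into the single two-dimensional Gronwall-type inequality
$$ V(t_1,t_2)\leq a(t_1)+b(t_2)+\int_0^{t_1}\int_0^{t_2}\bigl(1+c(s_1,s_2)\bigr)V(s_1,s_2)\Delta_2s_2\Delta_1s_1. $$

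Next, denote the right-hand side by $W(t_1,t_2)$, so that $V\leq W$, $W(t_1,0)=a(t_1)+b(0)$, $W(0,t_2)=a(0)+b(t_2)$, and $W$ is nondecreasing in $t_2$ (its $\Delta_2$-derivative is nonnegative, since $b^{\Delta_2}\geq 0$ and the integrand is nonnegative). Differentiating $W$ in $t_1$, replacing $V$ by $W$ in the resulting inner integral, and bounding $W(t_1,s_2)\leq W(t_1,t_2)$ for $s_2\leq t_2$, I obtain $W_{t_1}^{\Delta_1}\leq a^{\Delta_1}(t_1)+P(t_1,t_2)W(t_1,t_2)$, where $P(t_1,t_2)=\int_0^{t_2}\bigl(1+c(t_1,s_2)\bigr)\Delta_2s_2$. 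The maneuver that absorbs the non-product forcing term $a(t_1)+b(t_2)$ is to note $W(t_1,t_2)\geq W(t_1,0)=a(t_1)+b(0)>0$, so that $a^{\Delta_1}(t_1)\leq\frac{a^{\Delta_1}(t_1)}{a(t_1)+b(0)}W(t_1,t_2)$; folding this into the coefficient gives $W_{t_1}^{\Delta_1}(t_1,t_2)\leq p(t_1,t_2)W(t_1,t_2)$ with $p$ exactly as stated. Since $a,b>0$ and $a^{\Delta_1},c\geq 0$ force $p\geq 0$, one has $p\in\mathcal{R}^+$, and the Comparison Lemma~\ref{lemma2.1} (applied in $t_1$ with $t_2$ a parameter, $f\equiv 0$, $g=p$) gives $W(t_1,t_2)\leq W(0,t_2)e_p(t_1,0)=(a(0)+b(t_2))e_p(t_1,0)$, hence the same bound for $V$.

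Finally I would substitute $V(s_1,s_2)\leq(a(0)+b(s_2))e_p(s_1,0)$ back into the original hypothesis; since $c\geq 0$, the double integral of $c\,V$ is dominated by $\int_0^{t_1}\int_0^{t_2}q(s_1,s_2)\Delta_2s_2\Delta_1s_1$ with $q$ as in the statement, which yields $w=u_{t_1t_2}^{\Delta_1\Delta_2}\leq h(t_1,t_2)$. Integrating once more over $[0,t_1)\times[0,t_2)$ and using the zero boundary data produces $u(t_1,t_2)\leq\int_0^{t_1}\int_0^{t_2}h(s_1,s_2)\Delta_2s_2\Delta_1s_1$, which is the desired conclusion. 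The step I expect to be the main obstacle is the reduction in the third paragraph: the forcing term $a(t_1)+b(t_2)$ is neither constant nor of product form, so a two-variable Gronwall estimate with a nondecreasing majorant cannot be applied verbatim; the device of dividing $a^{\Delta_1}$ by the lower bound $a(t_1)+b(0)$—which relies essentially on the positivity of $a,b$ and the sign conditions $a^{\Delta_1},b^{\Delta_2}\geq 0$—is what converts the problem into a one-variable comparison and produces precisely the coefficient $p$.
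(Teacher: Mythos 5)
Your proof is correct, and it follows the paper's overall skeleton (reduce to a first--order inequality $(\cdot)^{\Delta_1}\leq p\,(\cdot)$ in $t_1$, apply Lemma~\ref{lemma2.1} with $f\equiv 0$, obtain the bound $(a(0)+b(t_2))e_p(t_1,0)$, then back--substitute and integrate twice), but the middle of the argument is genuinely different. The paper sets $z$ equal to the right--hand side of the hypothesis, forms $v=z+\int_0^{t_1}\int_0^{t_2}z$, derives the second--order inequality $v_{t_1t_2}^{\Delta_1\Delta_2}\leq(1+c)v$, and then passes to $\bigl(v_{t_1}^{\Delta_1}/v\bigr)_{t_2}^{\Delta_2}\leq 1+c$ via the time--scale quotient rule --- discarding the cross term $v_{t_1}^{\Delta_1}v_{t_2}^{\Delta_2}/\bigl(v\,v(t_1,\sigma_2(t_2))\bigr)$ using the signs of $v_{t_1}^{\Delta_1}$, $v_{t_2}^{\Delta_2}$ and the monotonicity $v\leq v(\cdot,\sigma_2(\cdot))$ --- and integrating in $t_2$ to reach $v_{t_1}^{\Delta_1}\leq p\,v$; the initial term $\frac{a^{\Delta_1}(t_1)}{a(t_1)+b(0)}$ in $p$ appears there as $v_{t_1}^{\Delta_1}(t_1,0)/v(t_1,0)$. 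You instead work with $V=u+u_{t_1t_2}^{\Delta_1\Delta_2}$ and its integral majorant $W$, differentiate only once in $t_1$, and use the monotonicity of $W$ in $t_2$ to pull $W(t_1,t_2)$ out of the inner integral, absorbing $a^{\Delta_1}(t_1)$ via the lower bound $W(t_1,t_2)\geq W(t_1,0)=a(t_1)+b(0)>0$. The two routes exploit the same monotonicity facts and produce the identical coefficient $p$, but yours avoids the second--order ``logarithmic derivative'' computation and its quotient--rule bookkeeping, which makes the reduction somewhat more elementary; the paper's version, on the other hand, is the template that generalizes to kernels where the majorant is not given by an explicit integral identity.
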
\vbox{\index{Pachpatte's Inequality}}
\begin{proof}
Let
\begin{equation*}
z(t_1, t_2)= a(t_1)+b(t_2) +\int_0^{t_1}\int_0^{t_2} c(s_1, s_2) \left(u(s_1, s_2)+u_{t_1t_2}^{\Delta_1\Delta_2}(s_1, s_2)\right)\Delta_2 s_2\Delta_1 s_1,
\end{equation*}
$(t_1, t_2)\in (\mathbb{R}_+\bigcap \mathbb{T}_1)\times (\mathbb{R}_+\bigcap \mathbb{T}_2)$. Then
\begin{equation}
\label{6.8} u_{t_1t_2}^{\Delta_1\Delta_2} (t_1, t_2)\leq z(t_1, t_2),\quad (t_1, t_2)\in (\mathbb{R}_+\bigcap \mathbb{T}_1)\times (\mathbb{R}_+\bigcap \mathbb{T}_2),
\end{equation}
\begin{eqnarray*}
z(t_1, 0)&=& a(t_1)+b(0),\quad t_1\in \mathbb{R}_+\bigcap \mathbb{T}_1,\\ \\
z(0, t_2)&=& a(0)+b(t_2),\quad t_2\in \mathbb{R}_+\bigcap \mathbb{T}_2,\\ \\
z_{t_1}^{\Delta_1}(t_1, t_2)&=& a^{\Delta_1}(t_1)+\int_0^{t_2} c(t_1, s_2) \left(u(t_1, s_2)+u_{t_1t_2}^{\Delta_1\Delta_2}(t_1, s_2)\right)\Delta_2 s_2,
\end{eqnarray*}
and
\begin{equation}
\label{6.9} z_{t_1t_2}^{\Delta_1\Delta_2}(t_1, t_2)=c(t_1, t_2)\left(u(t_1, t_2)+u_{t_1t_2}^{\Delta_1\Delta_2}(t_1, t_2)\right),\quad (t_1, t_2)\in (\mathbb{R}_+\bigcap \mathbb{T}_1)\times (\mathbb{R}_+\bigcap \mathbb{T}_2).
\end{equation}
Since
\begin{equation*}
u(0, t_2)=u(t_1, 0)=0,\quad (t_1, t_2)\in (\mathbb{R}_+\bigcap \mathbb{T}_1)\times (\mathbb{R}_+\bigcap \mathbb{T}_2),
\end{equation*}
we have
\begin{equation*}
u_{t_2}^{\Delta_2}(0, t_2)= u_{t_1}^{\Delta_1}(t_1, 0)=0,\quad (t_1, t_2)\in (\mathbb{R}_+\bigcap \mathbb{T}_1)\times (\mathbb{R}_+\bigcap \mathbb{T}_2).
\end{equation*}
Hence and \eqref{6.8}, we obtain
\begin{equation*}
u_{t_1}^{\Delta_1}(t_1, t_2)-u_{t_1}^{\Delta_1}(t_1, 0)\leq \int_0^{t_2} z(t_1, s_2)\Delta_2 s_2,\quad (t_1, t_2)\in (\mathbb{R}_+\bigcap \mathbb{T}_1)\times (\mathbb{R}_+\bigcap \mathbb{T}_2),
\end{equation*}
or
\begin{equation*}
u_{t_1}^{\Delta_1}(t_1, t_2) \leq \int_0^{t_2} z(t_1, s_2)\Delta_2 s_2,\quad (t_1, t_2)\in (\mathbb{R}_+\bigcap \mathbb{T}_1)\times (\mathbb{R}_+\bigcap \mathbb{T}_2),
\end{equation*}
whereupon
\begin{equation*}
u(t_1, t_2)-u(0, t_2)\leq \int_0^{t_1}\int_0^{t_2} z(s_1, s_2)\Delta_2s_2\Delta_1 s_1,\quad (t_1, t_2)\in (\mathbb{R}_+\bigcap \mathbb{T}_1)\times (\mathbb{R}_+\bigcap \mathbb{T}_2),
\end{equation*}
or
\begin{equation*}
u(t_1, t_2)\leq \int_0^{t_1}\int_0^{t_2} z(s_1, s_2)\Delta_2s_2\Delta_1 s_1,\quad (t_1, t_2)\in (\mathbb{R}_+\bigcap \mathbb{T}_1)\times (\mathbb{R}_+\bigcap \mathbb{T}_2).
\end{equation*}
Using the last inequality and the inequality \eqref{6.9}, we get
\begin{equation*}
z_{t_1t_2}^{\Delta_1\Delta_2}(t_1, t_2)\leq c(t_1, t_2) \left(z(t_1, t_2)+\int_0^{t_1}\int_0^{t_2} z(s_1, s_2) \Delta_2 s_2\Delta_1s_1\right),
\end{equation*}
$(t_1, t_2)\in (\mathbb{R}_+\bigcap \mathbb{T}_1)\times (\mathbb{R}_+\bigcap \mathbb{T}_2)$. Let
\begin{equation*}
v(t_1, t_2)= z(t_1, t_2) +\int_0^{t_1}\int_0^{t_2} z(s_1, s_2)\Delta_2 s_2\Delta_1 s_1,\quad (t_1, t_2)\in (\mathbb{R}_+\bigcap \mathbb{T}_1)\times (\mathbb{R}_+\bigcap \mathbb{T}_2).
\end{equation*}
Then
\begin{eqnarray*}
v(t_1, 0)&=& z(t_1, 0),\\ \\
v(0, t_2)&=& z(0, t_2),\quad (t_1, t_2)\in (\mathbb{R}_+\bigcap \mathbb{T}_1)\times (\mathbb{R}_+\bigcap \mathbb{T}_2),
\end{eqnarray*}
and
\begin{eqnarray*}
z(t_1, t_2)&\leq& v(t_1, t_2),\\ \\
z_{t_1t_2}^{\Delta_1\Delta_2}(t_1, t_2)&\leq& c(t_1, t_2) v(t_1, t_2),\quad (t_1, t_2)\in (\mathbb{R}_+\bigcap \mathbb{T}_1)\times (\mathbb{R}_+\bigcap \mathbb{T}_2).
\end{eqnarray*}
Next,
\begin{eqnarray*}
v_{t_1}^{\Delta_1}(t_1, t_2)&=& z_{t_1}^{\Delta_1}(t_1, t_2)+\int_0^{t_2} z(t_1, s_2)\Delta_2 s_2,\\ \\
v_{t_1t_2}^{\Delta_1\Delta_2}(t_1, t_2)&=& z_{t_1t_2}^{\Delta_1\Delta_2}(t_1, t_2)+z(t_1, t_2)\\ \\
&\leq& c(t_1, t_2) v(t_1, t_2)+v(t_1, t_2)\\ \\
&=& \left(1+c(t_1, t_2)\right)v(t_1, t_2),\quad (t_1, t_2)\in (\mathbb{R}_+\bigcap \mathbb{T}_1)\times (\mathbb{R}_+\bigcap \mathbb{T}_2).
\end{eqnarray*}
From here,
\begin{equation*}
\frac{v_{t_1t_2}^{\Delta_1\Delta_2}(t_1, t_2)}{v(t_1, t_2)}\leq 1+c(t_1, t_2),\quad (t_1, t_2)\in (\mathbb{R}_+\bigcap \mathbb{T}_1)\times (\mathbb{R}_+\bigcap \mathbb{T}_2),
\end{equation*}
or
\begin{equation}
\label{6.10} \frac{v_{t_1t_2}^{\Delta_1\Delta_2}(t_1, t_2) v(t_1, t_2)}{\left(v(t_1, t_2)\right)^2}\leq 1+c(t_1, t_2),
\end{equation}
$(t_1, t_2)\in (\mathbb{R}_+\bigcap \mathbb{T}_1)\times (\mathbb{R}_+\bigcap \mathbb{T}_2)$.
Since
\begin{eqnarray*}
z_{t_2}^{\Delta_2}(t_1, t_2)&=& b^{\Delta_2}(t_2) +\int_0^{t_1} c(s_1, t_2) \left(u(s_1, t_2)+u_{t_1t_2}^{\Delta_1\Delta_2}(s_1, t_2)\right)\Delta_1 s_1,\\ \\
&& (t_1, t_2)\in (\mathbb{R}_+\bigcap \mathbb{T}_1)\times (\mathbb{R}_+\bigcap \mathbb{T}_2),
\end{eqnarray*}
and because
\begin{eqnarray*}
b^{\Delta_2}(t_2)&\geq& 0,\quad t_2\in \mathbb{R}_+\bigcap \mathbb{T}_2,\\ \\
c(t_1, t_2)&\geq& 0,\quad u(t_1, t_2)\geq 0,\quad u_{t_1t_2}^{\Delta_1\Delta_2}(t_1, t_2)\geq 0,
\end{eqnarray*}
$(t_1, t_2)\in (\mathbb{R}_+\bigcap \mathbb{T}_1)\times (\mathbb{R}_+\bigcap \mathbb{T}_2)$, we conclude that $z(t_1, t_2)$ is a nondecreasing function with respect to $t_2$. Therefore $v(t_1, t_2)$ is a nondecreasing function with respect to $t_2$. From here
\begin{equation*}
v(t_1, t_2)\leq v(t_1, \sigma_2(t_2)),\quad (t_1, t_2)\in (\mathbb{R}_+\bigcap \mathbb{T}_1)\times (\mathbb{R}_+\bigcap \mathbb{T}_2).
\end{equation*}
Hence, using \eqref{6.10}, we obtain
\begin{equation}
\label{6.11} \frac{v_{t_1t_2}^{\Delta_1\Delta_2}(t_1, t_2)v(t_1, t_2)}{v(t_1, t_2) v(t_1, \sigma_2(t_2))}\leq 1+c(t_1, t_2),\quad (t_1, t_2)\in (\mathbb{R}_+\bigcap \mathbb{T}_1)\times (\mathbb{R}_+\bigcap \mathbb{T}_2).
\end{equation}
Observe that
\begin{eqnarray*}
v_{t_1}^{\Delta_1}(t_1, t_2)&\geq& 0,\\ \\
v_{t_2}^{\Delta_2}(t_1, t_2)&=& z_{t_2}^{\Delta_2}(t_1, t_2)+\int_0^{t_1}z(s_1, t_2) \Delta_1 s_1\\ \\
&\geq& 0, \quad (t_1, t_2)\in (\mathbb{R}_+\bigcap \mathbb{T}_1)\times (\mathbb{R}_+\bigcap \mathbb{T}_2).
\end{eqnarray*}
From here and from \eqref{6.11}, we go to
\begin{eqnarray*}
 \frac{v_{t_1t_2}^{\Delta_1\Delta_2}(t_1, t_2)v(t_1, t_2)}{v(t_1, t_2) v(t_1, \sigma_2(t_2))}&\leq& 1+c(t_1, t_2)\\ \\
 &\leq& 1+c(t_1, t_2) +\frac{v_{t_1}^{\Delta_1}(t_1, t_2) v_{t_2}^{\Delta_2}(t_1, t_2)}{v(t_1, t_2) v(t_1, \sigma_2(t_2))},
\end{eqnarray*}
$(t_1, t_2)\in (\mathbb{R}_+\bigcap \mathbb{T}_1)\times (\mathbb{R}_+\bigcap \mathbb{T}_2)$,
or
\begin{equation*}
 \frac{v_{t_1t_2}^{\Delta_1\Delta_2}(t_1, t_2)v(t_1, t_2)}{v(t_1, t_2) v(t_1, \sigma_2(t_2))}-\frac{v_{t_1}^{\Delta_1}(t_1, t_2) v_{t_2}^{\Delta_2}(t_1, t_2)}{v(t_1, t_2) v(t_1, \sigma_2(t_2))}\leq 1+c(t_1, t_2),
\end{equation*}
$(t_1, t_2)\in (\mathbb{R}_+\bigcap \mathbb{T}_1)\times (\mathbb{R}_+\bigcap \mathbb{T}_2)$,
or
\begin{equation*}
\left(\frac{v_{t_1}^{\Delta_1}(t_1, t_2)}{v(t_1, t_2)}\right)_{t_2}^{\Delta_2}\leq 1+c(t_1, t_2),\quad (t_1, t_2)\in (\mathbb{R}_+\bigcap \mathbb{T}_1)\times (\mathbb{R}_+\bigcap \mathbb{T}_2).
\end{equation*}
From here,
\begin{equation*}
\frac{v_{t_1}^{\Delta_1}(t_1, t_2)}{v(t_1, t_2)}-\frac{v_{t_1}^{\Delta_1}(t_1, 0)}{v(t_1, 0)}\leq \int_0^{t_2} \left(1+c(t_1, s_2)\right)\Delta_2 s_2,\quad (t_1, t_2)\in (\mathbb{R}_+\bigcap \mathbb{T}_1)\times (\mathbb{R}_+\bigcap \mathbb{T}_2),
\end{equation*}
or
\begin{equation*}
\frac{v_{t_1}^{\Delta_1}(t_1, t_2)}{v(t_1, t_2)}-\frac{z_{t_1}^{\Delta_1}(t_1, 0)}{z(t_1, 0)}\leq \int_0^{t_2} \left(1+c(t_1, s_2)\right)\Delta_2 s_2,\quad (t_1, t_2)\in (\mathbb{R}_+\bigcap \mathbb{T}_1)\times (\mathbb{R}_+\bigcap \mathbb{T}_2),
\end{equation*}
or
\begin{equation*}
\frac{v_{t_1}^{\Delta_1}(t_1, t_2)}{v(t_1, t_2)}-\frac{a^{\Delta_1}(t_1)}{a(t_1)+b(0)}\leq \int_0^{t_2} \left(1+c(t_1, s_2)\right)\Delta_2 s_2,\quad (t_1, t_2)\in (\mathbb{R}_+\bigcap \mathbb{T}_1)\times (\mathbb{R}_+\bigcap \mathbb{T}_2),
\end{equation*}
or
\begin{eqnarray*}
\frac{v_{t_1}^{\Delta_1}(t_1, t_2)}{v(t_1, t_2)}&\leq& \frac{a^{\Delta_1}(t_1)}{a(t_1)+b(0)}+ \int_0^{t_2} \left(1+c(t_1, s_2)\right)\Delta_2 s_2\\ \\
&=& p(t_1, t_2),\quad (t_1, t_2)\in (\mathbb{R}_+\bigcap \mathbb{T}_1)\times (\mathbb{R}_+\bigcap \mathbb{T}_2),
\end{eqnarray*}
or
\begin{equation*}
v_{t_1}^{\Delta_1}(t_1, t_2)\leq p(t_1, t_2) v(t_1, t_2),\quad (t_1, t_2)\in (\mathbb{R}_+\bigcap \mathbb{T}_1)\times (\mathbb{R}_+\bigcap \mathbb{T}_2).
\end{equation*}
From the last inequality and from Lemma \ref{lemma2.1}, we obtain
\begin{eqnarray*}
v(t_1, t_2)&\leq& v(0, t_2) e_p(t_1, 0)\\ \\
&=& z(0, t_2) e_p(t_1, 0)\\ \\
&=& \left(a(0)+b(t_2)\right) e_p(t_1, 0),\quad (t_1, t_2)\in (\mathbb{R}_+\bigcap \mathbb{T}_1)\times (\mathbb{R}_+\bigcap \mathbb{T}_2).
\end{eqnarray*}
Therefore
\begin{eqnarray*}
z_{t_1t_2}^{\Delta_1\Delta_2}(t_1, t_2)&\leq&  c(t_1, t_2) v(t_1, t_2)\\ \\
&\leq& \left(a(0)+b(t_2)\right)e_p(t_1, 0) c(t_1, t_2)\\ \\
&=& q(t_1, t_2),\quad (t_1, t_2)\in (\mathbb{R}_+\bigcap \mathbb{T}_1)\times (\mathbb{R}_+\bigcap \mathbb{T}_2),\\ \\
z_{t_1}^{\Delta_1}(t_1, t_2)-z_{t_1}^{\Delta_1}(t_1, 0)&\leq& \int_0^{t_2} q(t_1, s_2)\Delta_2 s_2,\\ \\
z_{t_1}^{\Delta_1}(t_1, t_2)&\leq& a^{\Delta_1}(t_1)+\int_0^{t_2} q(t_1, s_2)\Delta_2 s_2,
\end{eqnarray*}
\begin{eqnarray*}
z(t_1, t_2)-z(0, t_2)&\leq& a(t_1)-a(0)+\int_0^{t_1}\int_0^{t_2} q(s_1, s_2)\Delta_2 s_2\Delta_1 s_1,\\ \\
z(t_1, t_2)&\leq& a(0)+b(t_2)+a(t_1)-a(0)+\int_0^{t_1}\int_0^{t_2} q(s_1, s_2)\Delta_2 s_2\Delta_1 s_1\\ \\
&=& a(t_1)+b(t_2)+\int_0^{t_1}\int_0^{t_2} q(s_1, s_2)\Delta_2 s_2\Delta_1 s_1\\ \\
&=& h(t_1, t_2),\\ \\
u_{t_1t_2}^{\Delta_1\Delta_2}(t_1, t_2)&\leq& h(t_1, t_2),\\ \\
u_{t_1}^{\Delta_1}(t_1, t_2)-u_{t_1}^{\Delta_1}(t_1, 0)&\leq& \int_0^{t_2} h(t_1, s_2)\Delta_2 s_2,\\ \\
u_{t_1}^{\Delta_1}(t_1, t_2)&\leq& \int_0^{t_2} h(t_1, s_2)\Delta_2 s_2,\\ \\
u(t_1, t_2)-u(0, t_2)&\leq& \int_0^{t_1}\int_0^{t_2} h(s_1, s_2)\Delta_2 s_2\Delta_1 s_1,\\ \\
u(t_1, t_2)&\leq& \int_0^{t_1}\int_0^{t_2} h(s_1, s_2) \Delta_2 s_2\Delta_1 s_1,\quad (t_1, t_2)\in (\mathbb{R}_+\bigcap \mathbb{T}_1)\times (\mathbb{R}_+\bigcap \mathbb{T}_2).
\end{eqnarray*}
This completes the proof.
\end{proof}

\end{document}